\documentclass[reqno,12pt]{amsart}
\pdfoutput=1
\usepackage{amsmath,amsthm,amsfonts,amssymb,ifpdf}
\usepackage{amssymb}
\usepackage{amsmath}
\usepackage{amsthm}
\usepackage{graphicx}
\usepackage[all]{xy}
\usepackage{enumerate}
\usepackage{tikz-cd}

\newcommand{\RomanNumeralCaps}[1]
    {\MakeUppercase{\romannumeral #1}}
\theoremstyle{plain} 
\newtheorem{theorem}{Theorem}[section]
\newtheorem{proposition}[theorem]{Proposition}
\newtheorem{lemma}[theorem]{Lemma}
\newtheorem{corollary}[theorem]{Corollary}
\newtheorem{question}[theorem]{Question}

\theoremstyle{definition} \newtheorem{definition}[theorem]{Definition}

\theoremstyle{remark} \newtheorem{remark}[theorem]{Remark}

\usepackage[all]{xy}

\ifpdf
  \usepackage[
    pdftex,
    colorlinks,%
    linkcolor=blue,citecolor=red,urlcolor=blue,
    hyperindex,%
    plainpages=false,%
    bookmarksopen,%
    bookmarksnumbered%
  ]{hyperref} 
 
 \usepackage{thumbpdf}
\else
  \usepackage{hyperref}
\fi

\def\Num{{\rm Num}}
\def\Div{{\rm Div}}

\def\Sym{{\rm Sym}}
\def\Pic{{\rm Pic}}
\def\Gr{{\rm Gr}}
\def\cont{{\rm cont}}
\def\dim{{\rm dim}}

\def\Nef{{\rm Nef}}
\def\NE{{\rm NE}}

\newcommand{\ncom}{\newcommand}
\ncom{\mylabel}[1]{{\rm (#1)}\label{#1}}
\ncom{\Hom}{{\textit{Hom}}}
\ncom{\eop}{{\hfill $\Box$}}
\begin{document}
\baselineskip=16pt

\setcounter{tocdepth}{1}

\title[Products of Fano varieties with Picard number one]{Classification of products of Fano varieties with Picard number one}

\author{Arijit Mukherjee}
\address{Department of Mathematics\\ Indian Institute of Technology Madras\\ Tamil Nadu - 600 036, India.}
\email{mukherjee7.arijit@gmail.com}

\begin{abstract}
Given a partition $(n_1,\ldots,n_r)$ of a positive integer $n$, one has the associated $n$-dimensional multiprojective space $\mathbb{P}^{n_1}\times \cdots \times \mathbb{P}^{n_r}$.  We show that distinct partitions of $n$ yield non-isomorphic multiprojective spaces, giving a new proof via the extremal contractions of their closed cone of curves.  In contrast to the earlier approaches, the argument here is uniform across all partitions, and extends beyond multiprojective spaces.  In fact, we further extend it to a more general setting, namely to products of Fano varieties of Picard number one: we prove that a fixed such factor in each dimension makes the products attached to distinct partitions pairwise non-isomorphic.  As a consequence, a complete classification of products of smooth quadrics, of dimension $\geq 3$, has been obtained.
\end{abstract}
\maketitle
\textbf{Keywords :} Multiprojective space, Fano Variety, cone of curves, extremal contraction.\\
\textbf{2020 Mathematics Subject Classification :} 14E30, 14J45, 14M22. 
\tableofcontents

\section{Introduction}
\label{sec:Introduction}
Let $d$ be any positive integer.  By $\mathbb{P}^d$ we denote the complex projective space of dimension $d$. Referring to the product of projective spaces by multiprojective spaces following the terminology of C. Musili (cf. \cite[Chapter 3, \S 36, p. 150]{Mus}), the following question arises quite naturally.  
\begin{question}\label{question}
Let $n$ be a positive integer. Given any two distinct partitions $(m_1,m_2,\ldots,m_r)$ and $(n_1,n_2,\ldots,n_s)$ of $n$, are the corresponding $n$-dimensional multiprojective spaces $\mathbb{P}^{m_1}\times\cdots\times \mathbb{P}^{m_r}$ and $\mathbb{P}^{n_1}\times \cdots\times\mathbb{P}^{n_s}$ non-isomorphic?
\end{question}
A complete solution to Question \ref{question} is given in \cite{MN}.  In that paper, the authors answered the question by breaking it into two mutually exclusive and exhaustive cases, namely for partitions of same length and partitions of different length.  The latter case has been handled easily by calculating the Picard group of the corresponding multiprojective spaces, whereas the former case has been dealt with using the description of the nef cones of the spaces involved (cf. \cite[Proposition 4.7 and Theorem 4.12]{MN}).  Another solution to the posed question has been provided in \cite[Theorem 3.5]{Mu} using a decomposition of tensor products of irreducible representations of simple Lie algebras.  

Question \ref{question} can be asked at several other generalised context.  
\begin{question}\label{generalised question}
Let $(m_1,\ldots,m_r)$ and $(n_1,\ldots,n_s)$ be any two distinct partitions of a positive integer $n$. 
\begin{enumerate}[(a)]
    \item Let $C$ be a smooth projective curve over $\mathbb{C}$ and $\Sym^d(C)$ be its $d$-th symmetric product. Are the corresponding $n$-dimensional multi symmetric products $\Sym^{m_1}(C)\times\cdots\times \Sym^{m_r}(C)$ and $\Sym^{n_1}(C)\times \cdots\times\Sym^{n_s}(C)$ non-isomorphic? 
    \item Let $\mathcal{F}_1^d$ be a fixed Fano variety with Picard number $1$ of dimension $d$.  Are the corresponding $n$-dimensional spaces $\mathcal{F}_1^{m_1}\times\cdots\times \mathcal{F}_1^{m_r}$ and $\mathcal{F}_1^{n_1}\times \cdots\times\mathcal{F}_1^{n_s}$ non-isomorphic?
\end{enumerate}
\end{question}
Clearly, Question \ref{question} is $C=\mathbb{P}^1$ version of Question \ref{generalised question}(a).  Question \ref{generalised question}(a) has been completely answered in \cite[Proposition 4.11]{MN}.  For even further progress in this direction, one can look at \cite{DMP} and \cite{Sarkar}. 

Note that to make sense of Question \ref{generalised question}(b), one needs to take $\mathcal{F}_1^d$ as a ``fixed'' Fano variety of dimension $d$ and Picard number $1$ as there are many such non-isomorphic Fano varieties of same dimension.  For example, a list of non-isomorphic Fano threefolds of Picard number one can be found in \cite[\S 12.2, pp.~214-215]{IvPr}.  Therefore, unless specified, such products of Fano varieties can be non-isomorphic even if the partitions are the same.  This is not at all the case for Question \ref{generalised question}(a), hence for Question \ref{question}, as given a curve $C$ and a positive integer $d$ there is only one $\Sym^d(C)$, up to isomorphism.

In this paper, we provide another alternative solution to the Question \ref{question} (cf. Theorem \ref{Main Theorem_1_with proof}).  We prove it by calculating the extremal contractions of the cone of curves of the multiprojective spaces involved.  This technique is very useful from the context of Minimal Model Program.  One can look at \cite{KM} and \cite{M} for more details about this machinery and its applications.  One notable advantage of this method over the methods used in \cite{MN} and \cite{Mu} is that it is uniform across all partitions, and extends beyond multiprojective spaces.  In fact, using the technique mentioned along with Kodaira vanishing theorem and Cone theorem, we further answer Question \ref{generalised question}(b) (cf. Theorem \ref{Main Theorem_2_with proof}).  This is immediately useful in classifying several important spaces like products of Grassmannians (cf. Remark \ref{Remark about Grassmannians and Fixed det moduli spaces}), products of quadrics of dimension $\geq 3$ (cf. Corollary \ref{quadric corollary}), products of fixed determinant moduli spaces of stable bundles over any smooth projective curve of genus $g\geq 2$ (cf. Remark \ref{Remark about Grassmannians and Fixed det moduli spaces}), etc.  

\section{Preliminaries}
In this section, we briefly recall some notions and results along with notations which are useful to prove our main theorems.

Let $X$ be a complete variety over complex numbers.  We denote the group of all (Cartier) divisor on $X$ by $\Div(X)$.  Two divisors $D_1$ and $D_2$ are said to be numerically equivalent, if for all irreducible curve $C\subseteq X$,
\begin{equation*}
(D_1\cdot C)=(D_2\cdot C).
\end{equation*}
Moreover, a divisor is said to be numerically trivial if it is numerically equivalent to zero.  The subgroup of $\Div(X)$ of all numerically trivial divisors is denoted by $\Num(X)$.  The corresponding quotient $\tfrac{\Div(X)}{\Num(X)}$, denoted by $N^1(X)$, is called the N\'eron-Severi group of $X$.  It is well known that $N^1(X)$ is a free abelian group of finite rank (cf. \cite[Proposition 1.1.16, p.~18]{L}).  By Picard number of $X$, denoted by $\rho(X)$, we mean the rank of $N^1(X)$. 

We now recall what one means by the nef cone of a variety and its dual cone.
\begin{definition}
Let $X$ be a complete variety.  Let $L$ be a line bundle on $X$ and $c_1(L)\in H^2(X,\mathbb{Z})$ be its cohomology class.  Then $L$ is said to be numerically effective, or nef in short, if 
\begin{equation*}
\int_Cc_1(L)\geq 0,
\end{equation*}
for every irreducible curve $C$ of $X$.  Equivalently, A (Cartier) divisor $D$ on $X$ is said to be nef if 
\begin{equation*}
(D\cdot C)\geq 0,
\end{equation*}
for every irreducible curve $C$ of $X$. 
\end{definition}
\begin{definition} 
A subset $K$ of a finite dimensional real vector space $V$ is said to be a cone if it is stable under multiplication by positive scalars.
\end{definition}
\begin{definition}
Let $X$ be a complete variety.  The cone $\Nef(X)$ of all nef $\mathbb{R}$-divisor classes $N^1(X)_{\mathbb{R}}(:=N^1(X)\otimes \mathbb{R})$ is called the nef cone of $X$. 
\end{definition}
We now recall the meaning of cone of curves of $X$, as a dual cone of $\Nef(X)$.  Let ${Z_1(X)}_{\mathbb{R}}$ be the real vector space of real one cycles on $X$.  Two such one cycles $\gamma_1$ and $\gamma_2$ are said to be numerically equivalent if for all $D\in \Div_{\mathbb{R}}(X)(:=\Div(X)\otimes \mathbb{R})$,
\begin{equation*}
\bigl(D\cdot\gamma_1\bigr)=\bigl(D\cdot\gamma_2\bigr).
\end{equation*}
Then the corresponding quotient vector space of numerically equivalent one cycles is denoted by ${N_1(X)}_{\mathbb{R}}$.
\begin{definition} 
Let $X$ be a complete variety.  The cone of curves $\NE(X)(\subseteq {N_1(X)}_{\mathbb{R}})$ of $X$ is the cone spanned by the classes of effective one cycles on $X$, i.e,
\begin{equation*}
\NE(X):=\Bigl\{\sum a_i[C_i]\mid C_i\subset X\;\text{irreducible\; curve\;}, a_i\geq 0\Bigr\}.
\end{equation*}
The closure (w.r.t standard Euclidean topology) $\overline{\NE}(X)$ of $\NE(X)$ is called the closed cone of curves on $X$.
\end{definition}

That brings us to the most important machinery, namely contraction of extremal faces of the closed cone curves of $X$, we use to prove the main results of this paper.  Precise meaning of the same is as follows.
\begin{definition}\label{extremal contraction of a face}
Let $X$ be a projective variety and $F$ be an extremal face of $\overline{\NE}(X)$.  Then a morphism $\cont_F:X\rightarrow Z$, $Z$ being a projective variety, is said to be an extremal contraction of $F$ if the following conditions are satisfied:
\begin{enumerate}
\item For an irreducible curve $C$ in $X$, $\cont_F(C)=\{\cdot\}$ if and only if $[C]\in F$.
\item $(\cont_F)_{\ast}\mathcal{O}_X=\mathcal{O}_Z$.
\end{enumerate}
\end{definition}
\begin{remark}\label{uniquesness of extremal contraction}
\begin{enumerate}
\item An extremal contraction $\cont_F$, as in Definition \ref{extremal contraction of a face}, are uniquely determined by the face $F$ (cf. \cite[Remark 1.26, p.~25]{KM}).
\item Not every face can have an extremal contraction (cf. \cite[Example 1.27, p.~25-26]{KM}).
\end{enumerate}  
\end{remark}
As mentioned in Remark \ref{uniquesness of extremal contraction}, extremal contraction may not always exist.  We now recall a condition which forces the existence of the same.
\begin{theorem}\label{existence of extremal contractions}
Let $X$ be a projective variety and $L$ a nef line bundle on $X$.  Let $F_L$ be a face of $\overline{\NE}(X)$ associated to $L$, defined as
\begin{equation*}
F_L:= \overline{\NE}(X)\cap L^{\perp}=\Bigl\{[C]\in N_1(X)_{\mathbb{R}}\mid (C\cdot L)=0\Bigr\}.
\end{equation*}
If 
\begin{equation*}
F_L\subset \overline{\NE}(X)_{K_X<0}=\Bigl\{[C]\in N_1(X)_{\mathbb{R}}\mid (C\cdot K_X)<0\Bigr\},
\end{equation*}
$K_X$ being the canonical line bundle of $X$, then there exists an extremal contraction $\cont_{F_L}: X\rightarrow Y$.
\end{theorem}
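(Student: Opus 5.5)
This statement is the contraction theorem of the Minimal Model Program in a "nef supporting divisor" form. My plan is to deduce it from the basepoint-free theorem, following \cite[Theorem 3.7]{KM}. (Strictly speaking one needs $X$ with mild singularities, e.g. smooth or klt; I will assume this implicitly, as it holds in all the applications below.)

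\textbf{Step 1: positivity of $aL-K_X$.} The first step is to show that for $a\gg 0$ the divisor $aL-K_X$ is ample on $X$. By Kleiman's criterion it suffices to show $(aL-K_X)\cdot z>0$ for every nonzero $z\in\overline{\NE}(X)$. Fix an ample $H$ and consider the compact slice
\begin{equation*}
S=\{z\in\overline{\NE}(X)\mid H\cdot z=1\}.
\end{equation*}
The set $F_L\cap S$ is compact, and by hypothesis $-K_X$ is strictly positive on it. Hence $-K_X$ is positive on some open neighbourhood $U$ of $F_L\cap S$ in $S$. On the compact set $S\setminus U$, the class $L$ is strictly positive, because $L$ is nef and vanishes on $S$ only along $F_L$. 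So $L\ge\epsilon>0$ there, while $K_X$ is bounded above. Choosing $a$ large therefore gives $aL-K_X>0$ on all of $S$. This step is elementary; the delicate point is only the compactness of the slice, which uses that $H$ is ample.

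\textbf{Step 2: basepoint-freeness.} Apply the Kawamata--Shokurov basepoint-free theorem to the nef divisor $L$, using that $aL-K_X$ is nef and big (indeed ample by Step 1). It gives that $mL$ is globally generated for all $m\gg 0$. This is the main obstacle: its proof rests on Kawamata--Viehweg vanishing (a Kodaira-type vanishing) and the non-vanishing theorem, and I would simply quote \cite[Theorem 3.3]{KM} rather than reprove it.

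\textbf{Step 3: constructing and checking the contraction.} Let $\phi: X\to\mathbb{P}^N$ be the morphism defined by $|mL|$, and take its Stein factorization $X\xrightarrow{f} Y\to\phi(X)$. Then $f_*\mathcal{O}_X=\mathcal{O}_Y$, and $mL=f^*A$ for an ample $A$ on $Y$. It remains to verify condition (1) of Definition \ref{extremal contraction of a face}. For an irreducible curve $C\subset X$:
\begin{itemize}
\item $f(C)$ is a point if and only if $A\cdot f_*C=0$, using ampleness of $A$;
\item $A\cdot f_*C=0$ if and only if $L\cdot C=0$, by the projection formula;
\item $L\cdot C=0$ if and only if $[C]\in F_L$.
\end{itemize}
Thus $f$ is the required extremal contraction $\cont_{F_L}$. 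Its uniqueness then follows from Remark \ref{uniquesness of extremal contraction}.
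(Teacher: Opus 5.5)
Your proposal is correct and follows essentially the paper's route. The paper simply cites Matsuki (Theorem 8.1.3, Exercise 8.1.4) and Koll\'ar--Mori (Theorem 3.7), and your three steps are the standard proof behind those references: ampleness of $aL-K_X$ via Kleiman's criterion on a compact slice, then the Kawamata--Shokurov basepoint-free theorem, then Stein factorization of the morphism given by $|mL|$. The rationality step of that proof is not needed here because $L$ is already a Cartier supporting divisor.

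Your caveat is well taken. As written, the statement needs $X$ to have mild singularities (e.g.\ klt, in particular smooth), so that $K_X$ is $\mathbb{Q}$-Cartier and the basepoint-free theorem applies. The hypothesis must also be read as $F_L\setminus\{0\}\subset\overline{\NE}(X)_{K_X<0}$, which is exactly how your slice argument uses it.
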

\begin{proof}
See \cite[Theorem 8.1.3 and Exercise 8.1.4, p.~302-304]{M}.  For a more general version see \cite[Theorem 3.7, p.~76]{KM}.
\end{proof}

\section{Proof of Main Theorems}
In this section, we classify the multiprojective spaces and product of Fano varieties with Picard number one.  Towards that, we first calculate the nef cone and closure of its dual cone of any multiprojective spaces.
\begin{lemma}\label{Nef cone of multiprojective spaces}
Let $n_1,n_2,\cdots,n_r$ be $r$ many positive integers and\begin{equation*}
pr_{n_i}: \mathbb{P}^{n_1}\times 
\mathbb{P}^{n_2}\times \cdots \times \mathbb{P}^{n_r} \longrightarrow \mathbb{P}^{n_i}
\end{equation*}
be the $i$-th projection map, for all $1\leq i\leq r$. Then
\begin{equation*}
\Nef(\mathbb{P}^{n_1}\times\cdots \times \mathbb{P}^{n_r})=\mathbb{R}_{\geq 0}[pr_{n_1}^{\ast}(\mathcal{O}_{\mathbb{P}^{n_1}}(1))]+\cdots+\mathbb{R}_{\geq 0}[pr_{n_r}^{\ast}(\mathcal{O}_{\mathbb{P}^{n_r}}(1))].
\end{equation*}
\end{lemma}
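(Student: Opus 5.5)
Write $X=\mathbb{P}^{n_1}\times\cdots\times\mathbb{P}^{n_r}$ and $H_i=pr_{n_i}^{\ast}(\mathcal{O}_{\mathbb{P}^{n_i}}(1))$. The plan is to show that $N^1(X)_{\mathbb{R}}$ has basis $[H_1],\ldots,[H_r]$, to exhibit test curves $\ell_1,\ldots,\ell_r$ dual to this basis, and then to read off the nef cone from the resulting coordinates. First I will fix a point $p_j\in\mathbb{P}^{n_j}$ for every $j$ and a line $L_i\subset\mathbb{P}^{n_i}$. Let $\ell_i$ be the curve $\{p_1\}\times\cdots\times L_i\times\cdots\times\{p_r\}$. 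Since $pr_{n_j}$ maps $\ell_i$ to a point when $j\neq i$, and maps it isomorphically onto the line $L_i$ when $j=i$, the projection formula gives
\begin{equation*}
(H_j\cdot \ell_i)=\delta_{ij}.
\end{equation*}

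Second, I need $\Pic(X)\cong\mathbb{Z}^r$, generated by the $H_i$. This is standard: $\Pic(X\times\mathbb{P}^m)\cong\Pic(X)\oplus\mathbb{Z}$ for smooth projective $X$, which follows e.g.\ from the projective bundle formula for the trivial bundle, and one inducts on $r$. Hence every divisor class is $D=\sum a_iH_i$ with $a_i\in\mathbb{Z}$. The duality with the $\ell_i$ shows that the $[H_i]$ are numerically independent, so they form a basis of $N^1(X)_{\mathbb{R}}$. Moreover the coefficients are recovered as $a_i=(D\cdot\ell_i)$.

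Third, I prove the two inclusions. Each $H_i$ is the pullback of the globally generated bundle $\mathcal{O}(1)$, so it is globally generated and therefore nef. Nefness is closed under sums with nonnegative real coefficients, so the cone on the right-hand side is contained in $\Nef(X)$. Conversely, if $D=\sum a_iH_i$ is a nef $\mathbb{R}$-class, then $a_i=(D\cdot\ell_i)\geq 0$ for each $i$, which gives the reverse inclusion.

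The only step with real content is the computation of $\Pic(X)$, or at least of $N^1(X)_{\mathbb{R}}$. Everything else is formal once the classes $[H_i]$ are known to span. If I want to avoid citing the Picard group computation, I can instead use the K\"unneth formula, $H^2(X,\mathbb{Z})=\bigoplus_i pr_{n_i}^{\ast}H^2(\mathbb{P}^{n_i},\mathbb{Z})$, because $H^1(\mathbb{P}^{n_i})=0$. I then observe that $N^1(X)$ embeds into $H^2$ through $c_1$, which shows that the span is all of $N^1(X)_{\mathbb{R}}$.
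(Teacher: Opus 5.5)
Your proof is correct and is essentially the paper's argument: write a class as $\sum_i a_i[H_i]$, pair it with the coordinate lines $\ell_i$ to get $a_i=(D\cdot\ell_i)$, and read off nefness from the signs of the $a_i$. You also make explicit two points the paper leaves implicit: that the $[H_i]$ span $N^1(X)_{\mathbb{R}}$ (via the Picard group or K\"unneth computation), and that nonnegative combinations of the $H_i$ are nef because each $H_i$ is globally generated, which is what the ``if'' direction needs.
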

\begin{proof}
For any $L\in \Pic(\mathbb{P}^{n_1}\times\cdots \times \mathbb{P}^{n_r})$, by $[L]$ we denote its image in $ N^1(\mathbb{P}^{n_1}\times\cdots \times \mathbb{P}^{n_r})_{\mathbb{R}}$.  So, there exists $a_i\in \mathbb{R}, 1\leq i \leq r$, such that 
\begin{equation*}
[L]=a_1[pr_{n_1}^{\ast}(\mathcal{O}_{\mathbb{P}^{n_1}}(1))]+\cdots+a_r[pr_{n_r}^{\ast}(\mathcal{O}_{\mathbb{P}^{n_r}}(1))].
\end{equation*}
Moreover, $[L]\in \Nef(\mathbb{P}^{n_1}\times\cdots \times \mathbb{P}^{n_r})$ if and only if $(L\cdot C)\geq 0$ for all irreducible curves $C\subset \mathbb{P}^{n_1}\times\cdots \times \mathbb{P}^{n_r}$.  In particular, we choose $C_1=\mathbb{P}^1\times\{\cdot\}\times \cdots \times \{\cdot\}$.  Then,
\begin{equation*}
\bigl(L\cdot C_1\bigr)=\bigl(a_1[pr_{n_1}^{\ast}(\mathcal{O}_{\mathbb{P}^{n_1}}(1))]+\cdots+a_r[pr_{n_r}^{\ast}(\mathcal{O}_{\mathbb{P}^{n_r}}(1))]\cdot[\mathbb{P}^1\times\{\cdot\}\times \cdots \times \{\cdot\}]\bigr)=a_1.   
\end{equation*}
Similarly, for all $1\leq i \leq r$, we have $(L\cdot C_i)=a_i$.  Therefore, $[L]\in \Nef(\mathbb{P}^{n_1}\times\cdots \times \mathbb{P}^{n_r})$ if and only if $a_i\geq 0$ for all $1\leq i\leq r$.  Hence the assertion follows.
\end{proof}
\begin{corollary}\label{Mori cone of multiprojective space}
Let $n_1,n_2,\cdots,n_r$ be $r$ many positive integers. Then
\begin{equation*}
\overline{\NE}(\mathbb{P}^{n_1}\times\cdots \times \mathbb{P}^{n_r})=\mathbb{R}_{\geq 0}[\mathbb{P}^1\times\{\cdot\}\times \cdots \times \{\cdot\}]+\cdots+\mathbb{R}_{\geq 0}[\{\cdot\}\times \cdots \times \{\cdot\}\times \mathbb{P}^1].
\end{equation*}
\end{corollary}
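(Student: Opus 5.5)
The plan is to obtain the statement from Lemma \ref{Nef cone of multiprojective spaces} by cone duality. Write $X=\mathbb{P}^{n_1}\times\cdots\times\mathbb{P}^{n_r}$, $H_i=pr_{n_i}^{\ast}(\mathcal{O}_{\mathbb{P}^{n_i}}(1))$, and let $C_i$ be the curve $\{\cdot\}\times\cdots\times\mathbb{P}^1\times\cdots\times\{\cdot\}$, with the line $\mathbb{P}^1\subset\mathbb{P}^{n_i}$ in the $i$-th slot. The intersection numbers are $(H_i\cdot C_j)=\delta_{ij}$, as already used in the proof of Lemma \ref{Nef cone of multiprojective spaces}.

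\emph{Basis.} Since $\Pic(X)\cong\mathbb{Z}^r$ is generated by the $H_i$, the classes $[H_i]$ span $N^1(X)_{\mathbb{R}}$. The relation $(H_i\cdot C_j)=\delta_{ij}$ shows that they are linearly independent, so $\dim N^1(X)_{\mathbb{R}}=r$. The intersection pairing between $N^1(X)_{\mathbb{R}}$ and $N_1(X)_{\mathbb{R}}$ is perfect by the definition of numerical equivalence. Hence $\dim N_1(X)_{\mathbb{R}}=r$, and $[C_1],\ldots,[C_r]$ form the dual basis.

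\emph{Inclusions.} The inclusion $\supseteq$ is immediate, because each $C_i$ is an irreducible curve and $\overline{\NE}(X)$ is a closed convex cone. For $\subseteq$, I will use the standard duality $\overline{\NE}(X)=\Nef(X)^{\vee}$. The inclusion $\overline{\NE}(X)\subseteq\Nef(X)^{\vee}$ holds by the definition of nefness together with continuity of the pairing, and this is the only direction needed. Take $\gamma\in\overline{\NE}(X)$ and write $\gamma=\sum b_j[C_j]$ in the dual basis. Then $b_i=(H_i\cdot\gamma)\geq 0$, because $H_i$ is nef by Lemma \ref{Nef cone of multiprojective spaces} and the pairing of a nef class with any element of $\overline{\NE}(X)$ is nonnegative. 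Therefore $\gamma$ lies in $\sum_i\mathbb{R}_{\geq 0}[C_i]$.

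\emph{Main point.} The only real care needed is the basis step: justifying that the $[C_i]$ span $N_1(X)_{\mathbb{R}}$. This follows from the perfect pairing and the dimension count, and it avoids any explicit description of arbitrary curve classes. If one prefers to avoid the duality formalism altogether, a direct alternative is available: any irreducible curve $C$ has class $\sum_i (H_i\cdot C)[C_i]$ with $(H_i\cdot C)\geq 0$, since $H_i$ is globally generated. Passing to closure then gives the result.
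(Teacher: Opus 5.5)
Your proof is correct and takes the same route as the paper, which simply says the statement follows from Lemma \ref{Nef cone of multiprojective spaces} by duality. You supply the details the paper leaves implicit: the dimension count that makes $[C_1],\ldots,[C_r]$ the dual basis to $[H_1],\ldots,[H_r]$, the elementary inclusion $\overline{\NE}(X)\subseteq\Nef(X)^{\vee}$ for one containment, and the effectivity of the $C_i$ for the other.
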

\begin{proof}
Follows directly from Lemma \ref{Nef cone of multiprojective spaces}.
\end{proof}
\begin{remark}
Identifying $\Pic(\mathbb{P}^{n_1}\times\cdots \times \mathbb{P}^{n_r})$ with direct sum of $r$ copies of $\mathbb{Z}$, the line bundle $pr_{n_i}^{\ast}(\mathcal{O}_{\mathbb{P}^{n_i}}(1))$ is nothing but $(0,0,\ldots,1,\dots,0)$, $1$ being in the $i$-th place.
\end{remark}
We now classify multiprojective spaces after calculating the extremal contractions of some faces of the closed cone of curves of the corresponding spaces.
\begin{proposition}\label{extremal contractions of multiprojective spaces}
Let $F_{(0,0,\ldots,1,\dots,0)}$ be a face of $\overline{\NE}(\mathbb{P}^{n_1}\times\cdots \times \mathbb{P}^{n_r})$ associated to the (nef) line bundle $(0,0,\ldots,1,\dots,0)$ on $\mathbb{P}^{n_1}\times\cdots \times \mathbb{P}^{n_r}$, $1$ being in the $i$-th place.  Then the projections 
\begin{equation*}
pr_{n_i}: \mathbb{P}^{n_1}\times 
\mathbb{P}^{n_2}\times \cdots \times \mathbb{P}^{n_r} \longrightarrow \mathbb{P}^{n_i}
\end{equation*}
are the extremal contractions of the faces $F_{(0,0,\ldots,1,\dots,0)}$, for all $1\leq i \leq r$. 
\end{proposition}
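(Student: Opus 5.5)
The plan is to verify directly the two conditions of Definition \ref{extremal contraction of a face} for $pr_{n_i}$ and the face $F:=F_{(0,\ldots,1,\ldots,0)}$, and then appeal to Remark \ref{uniquesness of extremal contraction}(1) for uniqueness. First we identify the face. By Corollary \ref{Mori cone of multiprojective space}, $\overline{\NE}$ is the simplicial cone spanned by the classes $[C_1],\ldots,[C_r]$, where $C_j$ is a line in the $j$-th factor with all other coordinates fixed. Write $L_i:=pr_{n_i}^{\ast}\mathcal{O}_{\mathbb{P}^{n_i}}(1)$. From the intersection numbers $(L_i\cdot C_j)=\delta_{ij}$ computed in the proof of Lemma \ref{Nef cone of multiprojective spaces}, a class $\sum a_j[C_j]$ with all $a_j\geq 0$ lies in $L_i^{\perp}$ if and only if $a_i=0$. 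Hence
\begin{equation*}
F=\sum_{j\neq i}\mathbb{R}_{\geq 0}[C_j].
\end{equation*}
We may also note that $-K_X=\sum_j (n_j+1)L_j$ is positive on every nonzero class of $\overline{\NE}(X)$. So Theorem \ref{existence of extremal contractions} already guarantees that the contraction exists. It remains to show that it is $pr_{n_i}$.

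For condition (1), let $C\subset X$ be an irreducible curve. Then $(L_i\cdot C)=\deg\bigl(pr_{n_i}^{\ast}\mathcal{O}(1)|_C\bigr)$, and by the projection formula this equals $\deg(pr_{n_i}|_C)\cdot\deg pr_{n_i}(C)$ when $pr_{n_i}(C)$ is a curve. Since $\mathcal{O}(1)$ is ample on $\mathbb{P}^{n_i}$, we get $(L_i\cdot C)=0$ if and only if $pr_{n_i}(C)$ is a point. Because $[C]\in\overline{\NE}$ and $L_i$ is nef, $(L_i\cdot C)=0$ is exactly the condition $[C]\in F$. Thus $pr_{n_i}$ contracts $C$ to a point if and only if $[C]\in F$.

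For condition (2), $pr_{n_i}$ is the base change to $\mathbb{P}^{n_i}$ of the structure morphism $Y:=\prod_{j\neq i}\mathbb{P}^{n_j}\to \mathrm{Spec}\,\mathbb{C}$. Since $H^0(Y,\mathcal{O}_Y)=\mathbb{C}$ ($Y$ is projective and connected), flat base change gives $(pr_{n_i})_{\ast}\mathcal{O}_X=\mathcal{O}_{\mathbb{P}^{n_i}}\otimes H^0(Y,\mathcal{O}_Y)=\mathcal{O}_{\mathbb{P}^{n_i}}$. Alternatively, $pr_{n_i}$ is a proper surjection with connected fibres onto a normal variety, and one applies Zariski's main theorem / Stein factorization. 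Combined with uniqueness, $pr_{n_i}=\cont_F$.

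The only step needing care is condition (1): one must use the projection formula for curves that are not the standard $C_j$, rather than only checking the generators of the cone. Everything else is formal.
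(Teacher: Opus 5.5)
Your proposal is correct and has the same skeleton as the paper's proof. You identify $F_{L_i}=\sum_{j\neq i}\mathbb{R}_{\geq 0}[C_j]$, note $K_X$-negativity so that Theorem \ref{existence of extremal contractions} applies, verify the two conditions of Definition \ref{extremal contraction of a face} for $pr_{n_i}$, and conclude by uniqueness.

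Where you differ is in how the two conditions are justified, and in both places your version is the sounder one. For condition (1) the paper only says it is ``clear'' from the description of the face. Your computation $(L_i\cdot C)=\deg(pr_{n_i}|_C)\cdot\deg pr_{n_i}(C)$, combined with ampleness of $\mathcal{O}(1)$, is exactly what is needed to pass from the generators $C_j$ to an arbitrary irreducible curve.

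For condition (2) the paper invokes the proof of Zariski's Main Theorem in Hartshorne, asserting that the $pr_{n_i}$ are ``birational projective morphisms''. For $r\geq 2$ the projection is not birational, since its fibres are copies of $\prod_{j\neq i}\mathbb{P}^{n_j}$. So that argument does not apply as written. Your flat base change computation $(pr_{n_i})_{\ast}\mathcal{O}_X=\mathcal{O}_{\mathbb{P}^{n_i}}\otimes_{\mathbb{C}}H^0(Y,\mathcal{O}_Y)=\mathcal{O}_{\mathbb{P}^{n_i}}$ is a correct proof. It also works verbatim for a product with any connected projective variety $Y$, so it covers the Fano case in Proposition \ref{Nef and Mori cone of product of Fano}, where the paper again defers to this step. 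Your Stein factorization alternative is also fine over $\mathbb{C}$.

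One small remark: once (1) and (2) are checked directly, the appeal to Theorem \ref{existence of extremal contractions} is not needed, since $pr_{n_i}$ is then an extremal contraction of $F$ by definition.
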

\begin{proof}
Let us denote the line bundle $(0,0,\ldots,1,\dots,0)$ and the curve $\{\cdot\}\times \cdots \times \mathbb{P}^1 \times \cdots \times \{\cdot\}$, $1$ being in the $i$-th place, by $L_i$ and $C_i$ respectively.  Then,
\begin{equation}\label{description of the faces}
\begin{split}
F_{L_i}&=\overline{\NE}(\mathbb{P}^{n_1}\times\cdots \times \mathbb{P}^{n_r})\cap L_i^{\perp}\\
&=\Bigl\{[C]\in N_1(\mathbb{P}^{n_1}\times\cdots \times \mathbb{P}^{n_r})_{\mathbb{R}}\mid (C\cdot L_i)=0\Bigr\}\\
&=+_{j\neq i}\mathbb{R}_{\geq 0}[C_j].
\end{split} 
\end{equation}
Therefore, for any $[C] \in F_{L_i}$,
\begin{equation*}
\begin{split}
\bigl(C\cdot K_{\mathbb{P}^{n_1}\times\cdots \times \mathbb{P}^{n_r}}\bigr)&=\bigl(C\cdot (-n_1-1,\ldots,-n_r-1)\bigr)\\
&=-\bigl(C\cdot (n_1+1,\ldots,n_r+1)\bigr)<0.
\end{split}
\end{equation*} 
Therefore,
\begin{equation*}
F_{L_i}\subseteq \overline{\NE}(\mathbb{P}^{n_1}\times\cdots \times \mathbb{P}^{n_r})_{K_{\mathbb{P}^{n_1}\times\cdots \times \mathbb{P}^{n_r}}<0}.
\end{equation*}
Hence, by Theorem \ref{existence of extremal contractions}, extremal contractions of the faces $F_{L_i}$ exist for all $i$. 

We now have the following claim.\\
\textit{Claim} : $\cont_{F_{L_i}}=pr_{n_i}.$\\
Firstly, by \eqref{description of the faces}, it is clear that for all $1\leq i \leq r$, $pr_{n_i}(C)=\{\cdot\}$ if and only if $[C]\in F_{L_i}$, for any irreducible curve $C$ in $\mathbb{P}^{n_1}\times\cdots \times \mathbb{P}^{n_r}$.  Secondly, as $pr_{n_i}: \mathbb{P}^{n_1}\times\cdots \times \mathbb{P}^{n_r} \rightarrow \mathbb{P}^{n_i}$'s are birational projective morphisms of noetherian integral schemes and $\mathbb{P}^{n_i}$'s are normal, by \cite[proof of Corollary 11.4 (Zariski's Main Theorem), p.~280]{Ha} we have
\begin{equation*}
{(pr_{n_i})}_{\ast} \mathcal{O}_{\mathbb{P}^{n_1}\times\cdots \times \mathbb{P}^{n_r}}=\mathcal{O}_{\mathbb{P}^{n_i}},
\end{equation*}
for all $1\leq i \leq r$.  Hence the claim follows.
\end{proof} 
\begin{theorem}\label{Main Theorem_1_with proof}
Let $n$ be any positive integer.  Given any two distinct partitions $(m_1,m_2,\ldots,\\m_r)$ and $(n_1,n_2,\ldots,n_s)$ of $n$, corresponding multiprojective spaces $\mathbb{P}^{m_1}\times\cdots\times \mathbb{P}^{m_r}$ and $\mathbb{P}^{n_1}\times \cdots\times\mathbb{P}^{n_s}$ are non-isomorphic.
\end{theorem}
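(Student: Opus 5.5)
The plan is to argue by contradiction from an isomorphism $\phi:X:=\mathbb{P}^{m_1}\times\cdots\times\mathbb{P}^{m_r}\to Y:=\mathbb{P}^{n_1}\times\cdots\times\mathbb{P}^{n_s}$ and to recover the multiset of dimensions from intrinsic data. The relevant intrinsic data are the closed cone of curves, its extremal faces of codimension one, and their extremal contractions.

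\emph{Matching the cones.} First, $\phi$ induces isomorphisms $\phi_*:N_1(X)_{\mathbb{R}}\to N_1(Y)_{\mathbb{R}}$ and $\phi^*:N^1(Y)_{\mathbb{R}}\to N^1(X)_{\mathbb{R}}$ that are dual to each other and compatible with intersection numbers. Hence $\phi_*(\overline{\NE}(X))=\overline{\NE}(Y)$ and $\phi^*(\Nef(Y))=\Nef(X)$. Comparing Picard numbers gives $r=s$, since Corollary \ref{Mori cone of multiprojective space} shows that $\overline{\NE}$ is simplicial of dimension $r$, respectively $s$. Alternatively, equal lengths can be taken as the only case left to handle. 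By Lemma \ref{Nef cone of multiprojective spaces}, $\Nef(X)$ is simplicial with extremal rays spanned by the $L_i=pr_{m_i}^*\mathcal{O}(1)$, and similarly for $Y$ with rays $L'_j$. A linear isomorphism of simplicial cones permutes the extremal rays, so there is a permutation $\sigma$ with $\phi^*[L'_{\sigma(i)}]=\lambda_i[L_i]$ for some $\lambda_i>0$. Consequently $\phi_*$ carries the face $F_{L_i}$ of $\overline{\NE}(X)$ onto the face $F_{L'_{\sigma(i)}}$ of $\overline{\NE}(Y)$.

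\emph{Matching the contractions.} By Proposition \ref{extremal contractions of multiprojective spaces}, $\cont_{F_{L_i}}=pr_{m_i}:X\to\mathbb{P}^{m_i}$ and $\cont_{F_{L'_{\sigma(i)}}}=pr_{n_{\sigma(i)}}:Y\to\mathbb{P}^{n_{\sigma(i)}}$. The composite $pr_{n_{\sigma(i)}}\circ\phi:X\to\mathbb{P}^{n_{\sigma(i)}}$ satisfies both conditions of Definition \ref{extremal contraction of a face} for the face $F_{L_i}$:
\begin{enumerate}
\item an irreducible curve $C$ is contracted if and only if $\phi(C)$ is contracted by $pr_{n_{\sigma(i)}}$, which holds if and only if $\phi_*[C]\in F_{L'_{\sigma(i)}}$, i.e.\ if and only if $[C]\in F_{L_i}$;
\item the equality $\mathcal{O}$-pushforward condition is preserved under composition with the isomorphism $\phi$.
\end{enumerate}
By the uniqueness in Remark \ref{uniquesness of extremal contraction}(1), this composite agrees with $pr_{m_i}$ up to an isomorphism of targets. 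Hence $\mathbb{P}^{m_i}\cong\mathbb{P}^{n_{\sigma(i)}}$, so $m_i=n_{\sigma(i)}$ by comparing dimensions. Since $\sigma$ is a permutation, the two partitions coincide as multisets, which contradicts their distinctness.

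\emph{Main obstacle.} The delicate step is justifying that an abstract isomorphism transports faces to faces and extremal contractions to extremal contractions, and that uniqueness identifies the targets up to isomorphism. I would handle this by noting that $\phi^*$ preserves nefness, since nefness is defined by intersection with curves and $\phi$ bijects curves. I would also spell out that uniqueness of $\cont_F$ means uniqueness up to isomorphism of the target, as in \cite[Remark 1.26]{KM}; the reason is that $Z=\mathrm{Proj}$ of the section ring of a suitable line bundle, or equivalently the Stein factorization is determined by which curves are contracted.
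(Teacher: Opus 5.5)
Your proposal is correct and follows essentially the same route as the paper: you identify the extremal contractions of the faces $F_{L_i}$ with the projections (Proposition \ref{extremal contractions of multiprojective spaces}) and then invoke uniqueness of extremal contractions. Your addition is the explicit transport of nef cones, faces and contractions along a putative isomorphism $\phi$, with the target dimension $m_i=n_{\sigma(i)}$ as the invariant; this makes precise the step the paper handles only informally, namely its comparison of the two ``sets of extremal contractions''.
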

\begin{proof}
By Proposition \ref{extremal contractions of multiprojective spaces}, the set of extremal contractions of faces $F_{L_i}$'s of $\overline{\NE}(\mathbb{P}^{m_1}\times\cdots\times \mathbb{P}^{m_r})$ is $\{pr_{m_i}\mid 1\leq i \leq r\}$, whereas the same for $\overline{\NE}(\mathbb{P}^{n_1}\times\cdots\times \mathbb{P}^{n_s})$ is $\{pr_{n_j}\mid 1\leq j \leq s\}$.  Therefore, if the partitions $(m_1,m_2,\ldots,m_r)$ and $(n_1,n_2,\ldots,n_s)$ of $n$ are distinct, then the corresponding set of extremal contractions are different.  Hence, by Remark \ref{uniquesness of extremal contraction}, the corresponding faces of $\overline{\NE}(\mathbb{P}^{m_1}\times\cdots\times \mathbb{P}^{m_r})$ and $\overline{\NE}(\mathbb{P}^{n_1}\times\cdots\times \mathbb{P}^{n_s})$ are different and so are the cone of curves themselves.  Therefore, the multiprojective spaces $\mathbb{P}^{m_1}\times\cdots\times \mathbb{P}^{m_r}$ and $\mathbb{P}^{n_1}\times \cdots\times\mathbb{P}^{n_s}$ are non-isomorphic.  
\end{proof}
We want to replicate the same classification result in more general set up, namely for Fano varieties with Picard number one.  We recall the precise definition of a Fano variety (cf. \cite[Section 0, p.~681]{Nad}).
\begin{definition}\label{Fano variety_definition}
A smooth connected projective variety $X$ over $\mathbb{C}$ is called a Fano variety if its anticanonical bundle $-K_X$ is ample.
\end{definition}
\begin{remark}
\begin{enumerate}
\item Though many mathematicians have considered Fano varieties with certain singularities, namely terminal and klt singularities, while studying Minimal Model Program, throughout this paper we assume a Fano variety to be smooth as mentioned in Definition \ref{Fano variety_definition}.
\item Fano varieties are not only connected, they are in fact rationally connected, i.e, any two general points can be joined by an (irreducible) rational curve, (cf. \cite{KMM}).  
\end{enumerate}
\end{remark}  
To prove the classification Theorem \ref{Main Theorem_1_with proof} for Fano varieties with Picard number one following the same procedure, we wish for Lemma \ref{Nef cone of multiprojective spaces}, Corollary \ref{Mori cone of multiprojective space} and Proposition \ref{extremal contractions of multiprojective spaces} to hold in this set up as well. One of the main results used to calculate the Nef cone of the multiprojective space $\mathbb{P}^{n_1}\times \cdots\times\mathbb{P}^{n_s}$ is that        
$\Pic(\mathbb{P}^{n_1}\times\cdots \times \mathbb{P}^{n_r})$ is the direct sum of $\Pic(\mathbb{P}^{n_i})$s, $1\leq i \leq r$.  This splitting of Picard group of multiprojective spaces in terms of product of Picard group of individual factors holds as the factor $\mathbb{P}^i$s are rational, (cf. \cite[Chapter \RomanNumeralCaps{2}, Corollary 6.16, p.~145 \& Exercise 6.1, p.~146]{Ha}).  As Fano varieties are not rational in general (cf. \cite{IvPr}), a priori it seems that a similar result about splitting of Picard group may not hold in this case.  We show that is not the case.  In that regard, we first recall the following result.
\begin{proposition}\label{Pic of prod is prod of Pics}
Let $X$ be an integral projective scheme over an algebraically closed field $k$ with $H^1(X,\mathcal{O}_X)=0$.  Let $Y$ be a connected scheme of finite type over $k$.  Then 
\begin{equation*}
\Pic(X\times Y)=\Pic(X)\times \Pic(Y).
\end{equation*}
\end{proposition}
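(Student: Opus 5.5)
The plan is to build the map $\phi:\Pic(X)\times\Pic(Y)\to\Pic(X\times Y)$, $(L,M)\mapsto p_X^{\ast}L\otimes p_Y^{\ast}M$, and show it is an isomorphism. It is a group homomorphism. It is injective because restricting $p_X^{\ast}L\otimes p_Y^{\ast}M$ to the slice $X\times\{y\}$ (for a closed point $y$) recovers $L$, and restricting to $\{x\}\times Y$ recovers $M$. So if the image is trivial, both $L$ and $M$ are trivial. All the work is in surjectivity: given $\mathcal{N}$ on $X\times Y$, I want to show that $\mathcal{N}\otimes p_X^{\ast}(\mathcal{N}|_{X\times\{y_0\}})^{-1}$ is pulled back from $Y$.

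For surjectivity I would use the seesaw principle together with the vanishing of $H^1(X,\mathcal{O}_X)$. Fix a closed point $y_0\in Y$ and set $L=\mathcal{N}|_{X\times\{y_0\}}$. Replace $\mathcal{N}$ by $\mathcal{N}'=\mathcal{N}\otimes p_X^{\ast}L^{-1}$, so that $\mathcal{N}'|_{X\times\{y_0\}}$ is trivial. The family $\{\mathcal{N}'|_{X\times\{y\}}\}_{y\in Y}$ is then a family of line bundles on $X$ parametrized by the connected base $Y$. Since $H^1(X,\mathcal{O}_X)=0$ is the tangent space to $\Pic(X)$, the Picard scheme $\Pic_{X/k}$ is discrete (reduced, zero-dimensional at each point). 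The classifying morphism $Y\to\Pic_{X/k}$ is therefore constant on connected components, so every restriction $\mathcal{N}'|_{X\times\{y\}}$ is trivial.

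If I want to avoid representability of $\Pic_{X/k}$, I would argue more directly with upper semicontinuity. The locus of $y$ where $h^0(X_y,\mathcal{N}'_y)\ge1$ and $h^0(X_y,\mathcal{N}'^{-1}_y)\ge1$ is closed. A degree-zero-style argument on integral projective $X$ shows that both are nonzero exactly when the restriction is trivial. Openness of this locus should follow from deformation theory: $H^1(X,\mathcal{O}_X)=0$ gives the infinitesimal rigidity needing $H^1$ of the trivial bundle to vanish. Connectedness of $Y$ then gives the whole of $Y$. I expect this openness/constancy step to be the main obstacle, and I would first try to cite it in the form of the seesaw theorem plus rigidity, as in Mumford's \emph{Abelian Varieties}, \S5, or Hartshorne Exercise III.12.6, which is exactly this statement.

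Once every fibre restriction is trivial, I apply Grauert's theorem / cohomology and base change to $p_{Y\ast}\mathcal{N}'$. Since $h^0(X_y,\mathcal{O}_{X_y})=1$ for all $y$ (as $X$ is integral and projective over algebraically closed $k$), $M:=p_{Y\ast}\mathcal{N}'$ is a line bundle on $Y$ whose formation commutes with base change. This uses flatness of $p_Y$ and, if $Y$ is not reduced, Grauert in the form of Hartshorne III.12.9 applied on a reduced cover, or cohomology and base change directly. The natural map $p_Y^{\ast}M\to\mathcal{N}'$ is then an isomorphism on each fibre, hence an isomorphism. This gives $\mathcal{N}\cong p_X^{\ast}L\otimes p_Y^{\ast}M$, completing surjectivity.
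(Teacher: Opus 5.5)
Your injectivity argument and the Picard-scheme step are fine. The last paragraph, however, has a real gap, and it sits exactly in the case the statement insists on, namely $Y$ non-reduced. You deduce that $M=(p_Y)_{\ast}\mathcal{N}'$ is invertible and commutes with base change ``since $h^0(X_y,\mathcal{O}_{X_y})=1$ for all $y$''. Over a non-reduced base, fibrewise triviality plus constant $h^0$ does not imply this. Take $X=E$ an elliptic curve, $Y=\mathrm{Spec}\,k[\epsilon]/(\epsilon^2)$, and $\mathcal{N}'$ the deformation of $\mathcal{O}_E$ given by a class $0\neq\xi\in H^1(E,\mathcal{O}_E)$. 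Its only fibre is $\mathcal{O}_E$, yet the sequence $0\to\epsilon\mathcal{N}'\to\mathcal{N}'\to\mathcal{O}_E\to 0$ has connecting map $1\mapsto\xi\neq 0$. So $H^0(\mathcal{N}')\cong k$ is killed by $\epsilon$, hence $(p_Y)_{\ast}\mathcal{N}'$ is not free, and $\mathcal{N}'$ is not a pullback since $\Pic(Y)=0$. Your proposed repair, Grauert ``on a reduced cover'', cannot work, because local freeness over $Y_{\mathrm{red}}$ says nothing over $Y$. What is missing is a second use of the hypothesis. For each closed $y$, $H^1(X_y,\mathcal{N}'_y)=H^1(X,\mathcal{O}_X)=0$, so the base change map $\varphi^1(y)$ is surjective, hence an isomorphism by Hartshorne III.12.11(a). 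Nakayama then gives $R^1(p_Y)_{\ast}\mathcal{N}'=0$ near $y$. Now III.12.11(b) makes $\varphi^0(y)$ surjective and $(p_Y)_{\ast}\mathcal{N}'$ locally free of rank $h^0(X,\mathcal{O}_X)=1$ with base change. With that inserted, your final paragraph is correct.

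The Picard-scheme step, used fully, makes the last paragraph superfluous anyway. Since $X$ has a $k$-point and $H^0(X,\mathcal{O}_X)=k$, one has $\Pic_{X/k}(Y)=\Pic(X\times Y)/\Pic(Y)$. A classifying map constant at the identity then says precisely $\mathcal{N}'\cong p_Y^{\ast}M$, for non-reduced $Y$ too. That route is valid but rests on Grothendieck's existence theorem. The paper gives no argument of its own; it just cites Hartshorne Exercise III.12.6, which is meant to be done with semicontinuity and cohomology and base change alone. In that elementary route, the openness you left to ``deformation theory'' is the same computation as above; infinitesimal rigidity by itself only controls Artinian thickenings. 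At a closed $y_1$ with $\mathcal{N}'_{y_1}$ trivial, $(p_Y)_{\ast}\mathcal{N}'$ is invertible with base change near $y_1$. A local generator restricts to a nonzero constant on $X\times\{y_1\}$, and by properness of $p_Y$ the image of its zero locus is closed and misses $y_1$; off that image $\mathcal{N}'$ is trivial. Combined with your closedness argument and the connectedness of $Y$, this gives fibrewise triviality without Picard schemes.
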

\begin{proof}
See \cite[Chapter \RomanNumeralCaps{3}, Exercise 12.6, p.~292]{Ha}.
\end{proof}
We now check one of the conditions mentioned in the hypothesis of Proposition \ref{Pic of prod is prod of Pics} for Fano varieties.
\begin{lemma}\label{vanishing of cohomologies for Fano variety}
Let $X$ be a Fano variety over $\mathbb{C}$.  Then for any positive integer $j$,
\begin{equation*}
H^j(X,\mathcal{O}_X)=0. 
\end{equation*} 
\end{lemma}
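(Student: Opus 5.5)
The plan is to apply the Kodaira vanishing theorem to the canonical bundle, twisted by the ample line bundle $-K_X$. Recall that for a smooth projective variety $X$ over $\mathbb{C}$ and an ample line bundle $L$ on $X$, Kodaira vanishing gives
\begin{equation*}
H^j(X, K_X\otimes L)=0 \quad \text{for all } j>0.
\end{equation*}
By Definition \ref{Fano variety_definition}, a Fano variety $X$ is smooth and projective over $\mathbb{C}$, and $-K_X$ is ample. So we may take $L=-K_X$, i.e.\ $L=\omega_X^{\vee}$.

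The key step is then the identification $K_X\otimes L = \omega_X\otimes\omega_X^{\vee}\cong\mathcal{O}_X$. Substituting this into the vanishing statement yields $H^j(X,\mathcal{O}_X)=0$ for every positive integer $j$, which is exactly the assertion of the lemma.

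There is essentially no obstacle beyond checking that the hypotheses of Kodaira vanishing hold. Smoothness and projectivity over $\mathbb{C}$ are built into Definition \ref{Fano variety_definition}, and ampleness of $-K_X$ is the defining property of a Fano variety. The one point worth stating explicitly is that we use the form of Kodaira vanishing for $K_X+L$ with $L$ ample. Alternatively, one could use the Serre-dual form $H^j(X,-L)=0$ for $j<\dim X$, applied to $L=-K_X$. Combined with Serre duality, $H^j(X,K_X)\cong H^{n-j}(X,\mathcal{O}_X)^{\vee}$ where $n=\dim X$, this gives the same conclusion. I will go with the direct form, since it is the cleanest route.
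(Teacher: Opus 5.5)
Your proposal is correct and matches the paper's proof: apply Kodaira vanishing to the ample bundle $-K_X$ on the smooth projective variety $X$, and use $K_X\otimes(-K_X)\cong\mathcal{O}_X$ to conclude $H^j(X,\mathcal{O}_X)=0$ for all $j>0$. The Serre-dual variant you mention is also valid, but it is not needed.
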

\begin{proof}
 Applying the Kodaira vanishing theorem (cf. \cite[Theorem 4.2.1, p.~248-249]{L}) to the ample line bundle $-K_X$, we obtain
\begin{equation*}
H^j(X,\mathcal{O}_X)=H^j(X,K_X\otimes(-K_X))=0\;\text{for\;all\;}j>0.
\end{equation*}
\end{proof} 
\begin{proposition}\label{Pic of prod is prod of Pics for Fano varieties}
Let $X$ and $Y$ be two Fano varieties over $\mathbb{C}$.  Then,
\begin{equation*}
\Pic(X\times Y)=\Pic(X)\times \Pic(Y).
\end{equation*}
Moreover, if $X$ and $Y$ are of Picard number one, then
\begin{equation*}
\Pic(X\times Y)=\mathbb{Z}\oplus \mathbb{Z}.
\end{equation*}   
\end{proposition}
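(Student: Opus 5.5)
The plan is to apply Proposition \ref{Pic of prod is prod of Pics} with $X$ the first Fano factor and $Y$ the second. First I check its hypotheses. A Fano variety $X$ is by Definition \ref{Fano variety_definition} smooth, connected and projective over $\mathbb{C}$. Smooth and connected together give irreducible, and a smooth scheme is reduced, so $X$ is an integral projective scheme over the algebraically closed field $\mathbb{C}$. Lemma \ref{vanishing of cohomologies for Fano variety} with $j=1$ gives $H^1(X,\mathcal{O}_X)=0$. The variety $Y$ is connected, being Fano, and it is of finite type over $\mathbb{C}$, being projective. Proposition \ref{Pic of prod is prod of Pics} therefore yields $\Pic(X\times Y)=\Pic(X)\times\Pic(Y)$. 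The isomorphism is $(L,M)\mapsto pr_X^{\ast}L\otimes pr_Y^{\ast}M$.

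For the second assertion I must show that $\Pic(X)\cong\mathbb{Z}$ whenever $X$ is Fano of Picard number one. This is the step needing care, since Picard number one only says that $N^1(X)$ has rank one, and $N^1(X)$ is a quotient of $\Pic(X)$. I need two facts:
\begin{enumerate}
\item $\Pic(X)$ is torsion-free and finitely generated.
\item Numerical equivalence coincides with linear equivalence on $X$.
\end{enumerate}

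For the first fact, I use Lemma \ref{vanishing of cohomologies for Fano variety} and the exponential sequence. The vanishing $H^1(X,\mathcal{O}_X)=H^2(X,\mathcal{O}_X)=0$ gives $\Pic(X)\cong H^2(X,\mathbb{Z})$, which is a finitely generated abelian group because $X$ is a compact manifold. Fano varieties are rationally connected, as recalled from \cite{KMM} in the Remark after Definition \ref{Fano variety_definition}, and hence simply connected. Therefore $H_1(X,\mathbb{Z})=0$. By universal coefficients the torsion of $H^2(X,\mathbb{Z})$, which is the torsion of $H_1(X,\mathbb{Z})$, vanishes. So $\Pic(X)\cong H^2(X,\mathbb{Z})$ is free of finite rank.

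For the second fact, the kernel of $\Pic(X)\to N^1(X)$ is the group of numerically trivial classes. Modulo $\Pic^0(X)$, which is zero because $H^1(X,\mathcal{O}_X)=0$, this kernel is torsion. Since $\Pic(X)$ is torsion-free, the kernel is zero. Hence $\Pic(X)\cong N^1(X)\cong\mathbb{Z}$ when $\rho(X)=1$.

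Combining the two parts gives $\Pic(X\times Y)\cong\mathbb{Z}\oplus\mathbb{Z}$. I expect the identification $\Pic(X)\cong\mathbb{Z}$ to be the main obstacle. If that is too heavy, a fallback is to read ``Picard number one'' as meaning $\Pic\cong\mathbb{Z}$, which is common for Fano varieties, and then only the first part is needed.
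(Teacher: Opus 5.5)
Your proof is correct. The first assertion follows the paper exactly: Proposition \ref{Pic of prod is prod of Pics} together with $H^1(X,\mathcal{O}_X)=0$ from Lemma \ref{vanishing of cohomologies for Fano variety}. Your check that a Fano variety is integral and of finite type only makes the hypotheses explicit.

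You differ on the second assertion, which the paper dismisses as ``obvious''. The paper defines $\rho(X)$ as the rank of $N^1(X)=\Div(X)/\Num(X)$, so the step is not automatic. One needs $\Pic(X)\to N^1(X)$ to be injective, i.e.\ $\Pic(X)$ must be torsion-free and numerical equivalence must coincide with linear equivalence. The paper tacitly takes ``Picard number one'' to mean $\Pic\cong\mathbb{Z}$; it later writes $\Pic(\mathcal{F}_1^{d_i})\cong\mathbb{Z}$ without comment. That is exactly your fallback reading. Your argument fills this gap, so the statement holds under the paper's own definition rather than by convention. Its steps are:
\begin{enumerate}
\item $\Pic(X)\cong H^2(X,\mathbb{Z})$ via the exponential sequence and $H^1=H^2=0$;
\item freeness from $H_1(X,\mathbb{Z})=0$ and universal coefficients;
\item injectivity from the finiteness of $\Pic^\tau(X)/\Pic^0(X)$ together with $\Pic^0(X)=0$.
\end{enumerate}

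Two of your inputs are substantial theorems and should be cited, not just asserted. These are the simple connectivity of rationally connected manifolds (Campana, Koll\'ar--Miyaoka--Mori) and the Matsusaka--Kleiman finiteness of $\Pic^\tau/\Pic^0$. The first can be avoided. If $H_1(X,\mathbb{Z})\neq 0$, it surjects onto some $\mathbb{Z}/m$ with $m>1$, giving a connected finite \'etale cover $X'\to X$ of degree $m$. Then $X'$ is again Fano, so Kodaira vanishing gives $1=\chi(\mathcal{O}_{X'})=m\,\chi(\mathcal{O}_X)=m$, a contradiction.
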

\begin{proof}
First part follows from Proposition \ref{Pic of prod is prod of Pics} and Lemma \ref{vanishing of cohomologies for Fano variety}.  Second part is then obvious. 
\end{proof}
We now record the description of the closed cone of curves of a Fano variety with Picard number one, which is the analogue of Corollary \ref{Mori cone of multiprojective space} needed in this general set up.
\begin{lemma}\label{Mori cone of Fano with Picard number one}
Let $X$ be a Fano variety with $\rho(X)=1$.  Then $N_1(X)_{\mathbb{R}}\cong \mathbb{R}$ and there exists a rational curve $\ell\subset X$ such that
\begin{equation*}
\overline{\NE}(X)=\mathbb{R}_{\geq 0}[\ell].
\end{equation*}
\end{lemma}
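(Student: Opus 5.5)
The plan is to handle the linear algebra first and then produce the rational curve from the Cone theorem (or from Mori's bend-and-break).

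\emph{Step 1: $N_1(X)_{\mathbb{R}}\cong\mathbb{R}$.} Since $\rho(X)=1$, the space $N^1(X)_{\mathbb{R}}$ is one-dimensional, spanned by the class of $-K_X$. The intersection pairing $N^1(X)_{\mathbb{R}}\times N_1(X)_{\mathbb{R}}\to\mathbb{R}$ is perfect: it is nondegenerate on the left by the definition of numerical equivalence of divisors, and on the right by the definition of $N_1(X)_{\mathbb{R}}$ as a quotient of $Z_1(X)_{\mathbb{R}}$ modulo cycles pairing to zero with every divisor. Hence $\dim N_1(X)_{\mathbb{R}}=\dim N^1(X)_{\mathbb{R}}=1$. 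The isomorphism $N_1(X)_{\mathbb{R}}\cong\mathbb{R}$ can be made explicit as $\gamma\mapsto(-K_X\cdot\gamma)$.

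\emph{Step 2: the cone is a ray.} Under this isomorphism, every irreducible curve $C$ satisfies $(-K_X\cdot C)>0$, because $-K_X$ is ample. So $\NE(X)\setminus\{0\}$ lies in the open half-line $\mathbb{R}_{>0}$. Since $X$ is projective of positive dimension it contains curves, so $\NE(X)$ is nonzero, and being a cone it equals $\mathbb{R}_{>0}\cup\{0\}$ after adjoining $0$. Its closure is therefore $\mathbb{R}_{\geq0}$, and $\overline{\NE}(X)=\mathbb{R}_{\geq0}[C]$ for any irreducible curve $C$.

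\emph{Step 3: a rational curve.} It remains to show that $X$ contains a rational curve $\ell$; any such $\ell$ then generates the ray by Step 2. This is the only non-formal step. I would invoke the Cone theorem (\cite[Theorem 3.7]{KM}): since $K_X$ is negative on all of $\overline{\NE}(X)\setminus\{0\}$, we get $\overline{\NE}(X)=\sum\mathbb{R}_{\geq0}[\ell_i]$ with the $\ell_i$ rational curves. The sum is nonempty because the cone is nonzero, so we may take $\ell=\ell_1$. Alternatively, one can cite Mori's bend-and-break, which gives a rational curve through every point of $X$ when $-K_X$ is ample, or the rational connectedness recalled in the preceding remark (\cite{KMM}). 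Any of these suffices, and I expect this citation to be the main point of substance; everything else is linear algebra and ampleness of $-K_X$.
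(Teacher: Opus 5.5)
Your proof is correct and is essentially the paper's argument. Both use ampleness of $-K_X$ to get $\overline{\NE}(X)_{K_X\geq 0}=\{0\}$, invoke the Cone theorem to obtain rational curves spanning $\overline{\NE}(X)$, and use $\dim N_1(X)_{\mathbb{R}}=\rho(X)=1$ to reduce to a single ray. The only differences are organizational: you note that $\overline{\NE}(X)=\mathbb{R}_{\geq 0}[C]$ for any irreducible curve $C$ already follows from ampleness of $-K_X$, so the Cone theorem (or bend-and-break, or rational connectedness) is needed only to make the generator rational; and you justify $\dim N_1(X)_{\mathbb{R}}=\dim N^1(X)_{\mathbb{R}}$ via the perfect pairing, which the paper simply asserts.
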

\begin{proof}
As $X$ is Fano, $-K_X$ is ample and hence $K_X\cdot z<0$ for all nonzero $z\in \overline{\NE}(X)$.  That is,   $\overline{\NE}(X)_{K_X\geq 0}$ becomes trivial.  Therefore, by the Cone Theorem (cf. \cite[Theorem 1.5.33, p.~86]{L}), closed cone of curves $\overline{\NE}(X)\subseteq N_1(X)_{\mathbb{R}}$ is a finite rational polytope spanned by the classes of rational curves (cf. \cite[Example 1.5.34, p.~87]{L}).  Moreover, as $\dim_{\mathbb{R}}N_1(X)_{\mathbb{R}}=\dim_{\mathbb{R}}N^1(X)_{\mathbb{R}}=\rho(X)=1$, there exists some rational curve $\ell\subset X$ such that $\overline{\NE}(X)=\mathbb{R}_{\geq 0}[\ell]$.
\end{proof}
Let $\mathcal{F}_1^{d_1},\ldots,\mathcal{F}_1^{d_r}$ be Fano varieties with Picard number one, of dimensions $d_1,\ldots,d_r$ respectively, and let
\begin{equation*}
pr_{d_i}: \mathcal{F}_1^{d_1}\times \cdots \times \mathcal{F}_1^{d_r}\longrightarrow \mathcal{F}_1^{d_i}
\end{equation*}
denote the $i$-th projection, for all $1\leq i \leq r$.  Let $H_i$ denotes an ample generator of $\Pic(\mathcal{F}_1^{d_i})\cong \mathbb{Z}$, $1\leq i \leq r$.  By Proposition \ref{Pic of prod is prod of Pics for Fano varieties}, we have $\Pic(\mathcal{F}_1^{d_1}\times \cdots \times \mathcal{F}_1^{d_r})\cong \mathbb{Z}^{\oplus r}$, generated by the pullbacks $pr_{d_i}^{\ast}(H_i)$.  By Lemma \ref{Mori cone of Fano with Picard number one}, for each $i$ let $\ell_i\subset \mathcal{F}_1^{d_i}$ be a rational curve spanning $\overline{\NE}(\mathcal{F}_1^{d_i})$, and let $\widetilde{\ell}_i:=\{\cdot\}\times \cdots \times \ell_i \times \cdots \times \{\cdot\}$ be the corresponding curve in the product, $\ell_i$ being in the $i$-th place.  With these notations, the analogues of Lemma \ref{Nef cone of multiprojective spaces}, Corollary \ref{Mori cone of multiprojective space} and Proposition \ref{extremal contractions of multiprojective spaces} read as follows.
\begin{proposition}\label{Nef and Mori cone of product of Fano}
With the notations as above,
\begin{equation*}
\Nef(\mathcal{F}_1^{d_1}\times \cdots \times \mathcal{F}_1^{d_r})=\mathbb{R}_{\geq 0}[pr_{d_1}^{\ast}(H_1)]+\cdots +\mathbb{R}_{\geq 0}[pr_{d_r}^{\ast}(H_r)],
\end{equation*}
and
\begin{equation*}
\overline{\NE}(\mathcal{F}_1^{d_1}\times \cdots \times \mathcal{F}_1^{d_r})=\mathbb{R}_{\geq 0}[\widetilde{\ell}_1]+\cdots +\mathbb{R}_{\geq 0}[\widetilde{\ell}_r].
\end{equation*}
Moreover, the projections $pr_{d_i}$ are the extremal contractions of the faces $F_{pr_{d_i}^{\ast}(H_i)}$ associated to the nef line bundles $pr_{d_i}^{\ast}(H_i)$, for all $1\leq i \leq r$.
\end{proposition}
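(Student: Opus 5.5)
We follow the multiprojective argument almost verbatim; the one real change is that the intersection numbers $(H_i\cdot\ell_i)$ are positive integers rather than $1$. Write $X=\mathcal{F}_1^{d_1}\times\cdots\times\mathcal{F}_1^{d_r}$ and $L_i:=pr_{d_i}^{\ast}(H_i)$. By Proposition \ref{Pic of prod is prod of Pics for Fano varieties}, applied inductively (a product of Fano varieties is Fano, since $-K_{X}=\sum_i pr_{d_i}^{\ast}(-K_{\mathcal{F}_1^{d_i}})$ is ample), every class in $N^1(X)_{\mathbb{R}}$ can be written $[L]=\sum_i a_i[L_i]$. By the projection formula, $(L_j\cdot\widetilde{\ell}_i)=0$ for $j\neq i$, while $c_i:=(L_i\cdot\widetilde{\ell}_i)=(H_i\cdot\ell_i)>0$ because $H_i$ is ample. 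Hence $(L\cdot\widetilde{\ell}_i)=a_ic_i$, so a nef $L$ must have all $a_i\geq 0$. Conversely, each $L_i$ is the pullback of an ample bundle and is therefore nef. This gives the nef cone. In particular the matrix $\bigl((L_j\cdot\widetilde{\ell}_i)\bigr)$ is diagonal and invertible, so the $[L_i]$ are linearly independent and the $[\widetilde{\ell}_i]$ form a basis of $N_1(X)_{\mathbb{R}}$, which has dimension $r$.

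For the cone of curves, we use that $\overline{\NE}(X)$ is the dual of $\Nef(X)$ (Kleiman). The dual of the simplicial cone spanned by the $[L_i]$ is exactly the set of $\sum b_i[\widetilde{\ell}_i]$ with $b_i\geq 0$, using the diagonal pairing above. We could instead argue directly: any irreducible curve $C$ has class $\sum_i \frac{(L_i\cdot C)}{c_i}[\widetilde{\ell}_i]$ with nonnegative coefficients, since $pr_{d_i\ast}C$ is effective on $\mathcal{F}_1^{d_i}$. Either route gives the second equality and mirrors the proof of Corollary \ref{Mori cone of multiprojective space}.

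For the contractions, the face is $F_{L_i}=\overline{\NE}(X)\cap L_i^{\perp}=\sum_{j\neq i}\mathbb{R}_{\geq 0}[\widetilde{\ell}_j]$. Since $-K_X$ is ample, $K_X$ is negative on every nonzero class of $\overline{\NE}(X)$, so Theorem \ref{existence of extremal contractions} gives the existence of $\cont_{F_{L_i}}$. It remains to check the two conditions of Definition \ref{extremal contraction of a face} for $pr_{d_i}$, and then uniqueness (Remark \ref{uniquesness of extremal contraction}) gives $\cont_{F_{L_i}}=pr_{d_i}$.

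For condition (1), an irreducible curve $C$ is contracted by $pr_{d_i}$ iff $pr_{d_i\ast}C=0$. Since $H_i$ is ample, this holds iff $(L_i\cdot C)=(H_i\cdot pr_{d_i\ast}C)=0$, i.e. iff $[C]\in F_{L_i}$.

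Condition (2), $(pr_{d_i})_{\ast}\mathcal{O}_X=\mathcal{O}_{\mathcal{F}_1^{d_i}}$, is the step needing care. Instead of the Zariski Main Theorem route used in Proposition \ref{extremal contractions of multiprojective spaces}, we use flat base change along the flat morphism $\mathcal{F}_1^{d_i}\to\operatorname{Spec}\mathbb{C}$. This gives
\[
(pr_{d_i})_{\ast}\mathcal{O}_X\cong \mathcal{O}_{\mathcal{F}_1^{d_i}}\otimes_{\mathbb{C}} H^0\Bigl(\prod_{j\neq i}\mathcal{F}_1^{d_j},\mathcal{O}\Bigr)=\mathcal{O}_{\mathcal{F}_1^{d_i}},
\]
because the complementary product is a connected, reduced, projective variety, so its only global functions are constants. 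Alternatively, a proper surjective morphism to a normal variety with connected fibres has $f_\ast\mathcal{O}=\mathcal{O}$ via Stein factorization. Either argument finishes the proof.
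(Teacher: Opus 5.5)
Your proof is correct and follows the same skeleton as the paper's. Both arguments split $\Pic$, apply the projection formula to the curves $\widetilde{\ell}_i$ to get the nef cone, use duality for $\overline{\NE}$, and then invoke Theorem \ref{existence of extremal contractions} and check the two conditions of Definition \ref{extremal contraction of a face}.

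Where you depart from the paper, your version is tighter.

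\emph{The cone of curves.} Your direct decomposition $[C]=\sum_i \frac{(L_i\cdot C)}{c_i}[\widetilde{\ell}_i]$ gives the second equality without duality. It also does not need Lemma \ref{Mori cone of Fano with Picard number one}: any curve $\ell_i$ in each factor would do. You also state explicitly two points the paper leaves implicit. First, the reverse inclusion for the nef cone, namely that pullbacks of ample bundles are nef. Second, the linear independence of the $[L_i]$ and $[\widetilde{\ell}_i]$.

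\emph{Condition (2).} This is the more substantive difference. For $(pr_{d_i})_\ast\mathcal{O}_X=\mathcal{O}_{\mathcal{F}_1^{d_i}}$, the paper only says the argument is ``exactly as in'' Proposition \ref{extremal contractions of multiprojective spaces}. That proof calls the projections birational and cites the proof of Hartshorne's Corollary III.11.4. For $r\geq 2$, however, a projection has positive-dimensional fibres and is not birational, so that citation does not apply as stated. Your flat base change computation $\mathcal{O}_{\mathcal{F}_1^{d_i}}\otimes_{\mathbb{C}}H^0\bigl(\prod_{j\neq i}\mathcal{F}_1^{d_j},\mathcal{O}\bigr)=\mathcal{O}_{\mathcal{F}_1^{d_i}}$ is a valid justification of condition (2). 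So is your alternative via Stein factorization, using connected fibres over a normal base in characteristic zero.
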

\begin{proof}
By Proposition \ref{Pic of prod is prod of Pics for Fano varieties}, the class of any line bundle $L$ on the product can be written as $[L]=a_1[pr_{d_1}^{\ast}(H_1)]+\cdots+a_r[pr_{d_r}^{\ast}(H_r)]$ with $a_i\in \mathbb{R}$.  By the projection formula, $\bigl(L\cdot \widetilde{\ell}_i\bigr)=a_i\,(H_i\cdot \ell_i)$ for all $i$, and $(H_i\cdot \ell_i)>0$ since $H_i$ is ample.  Hence $[L]$ is nef if and only if $a_i\geq 0$ for all $i$, which gives the description of $\Nef(\mathcal{F}_1^{d_1}\times \cdots \times \mathcal{F}_1^{d_r})$.  The description of $\overline{\NE}(\mathcal{F}_1^{d_1}\times \cdots \times \mathcal{F}_1^{d_r})$ follows by duality, exactly as in Lemma \ref{Nef cone of multiprojective spaces} and Corollary \ref{Mori cone of multiprojective space}, using Lemma \ref{Mori cone of Fano with Picard number one} for each factor.

For the last assertion, writing $L_i:=pr_{d_i}^{\ast}(H_i)$ and $F_{L_i}=\overline{\NE}(\mathcal{F}_1^{d_1}\times \cdots \times \mathcal{F}_1^{d_r})\cap L_i^{\perp}=+_{j\neq i}\mathbb{R}_{\geq 0}[\widetilde{\ell}_j]$, the anticanonical bundle $-K_{\mathcal{F}_1^{d_1}\times \cdots \times \mathcal{F}_1^{d_r}}=\bigotimes_{i=1}^rpr_{d_i}^{\ast}(-K_{\mathcal{F}_1^{d_i}})$ is ample, being a tensor product of pullbacks of the ample bundles $-K_{\mathcal{F}_1^{d_i}}$.  Hence $(C\cdot K_{\mathcal{F}_1^{d_1}\times \cdots \times \mathcal{F}_1^{d_r}})<0$ for every $0\neq [C]\in \overline{\NE}(\mathcal{F}_1^{d_1}\times \cdots \times \mathcal{F}_1^{d_r})$ and in particular $F_{L_i}\subseteq \overline{\NE}(\mathcal{F}_1^{d_1}\times \cdots \times \mathcal{F}_1^{d_r})_{K_{\mathcal{F}_1^{d_1}\times \cdots \times \mathcal{F}_1^{d_r}}<0}$.  By Theorem \ref{existence of extremal contractions}, the extremal contraction of $F_{L_i}$ exists and coincides with $pr_{d_i}$ exactly as in the proof of Proposition \ref{extremal contractions of multiprojective spaces}.
\end{proof}
Finally, we have the following classification result for product of Fano varieties with Picard number one.    
\begin{theorem}\label{Main Theorem_2_with proof}
Let $n$ be any positive integer.  Given any two distinct partitions $(m_1,m_2,\ldots,\\m_r)$ and $(n_1,n_2,\ldots,n_s)$ of $n$, corresponding product spaces $\mathcal{F}_1^{m_1}\times\cdots\times \mathcal{F}_1^{m_r}$ and $\mathcal{F}_1^{n_1}\times \cdots\times\mathcal{F}_1^{n_s}$ are non-isomorphic.
\end{theorem}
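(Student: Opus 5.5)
Suppose, for contradiction, that there is an isomorphism $\phi:X:=\mathcal{F}_1^{m_1}\times\cdots\times\mathcal{F}_1^{m_r}\to Y:=\mathcal{F}_1^{n_1}\times\cdots\times\mathcal{F}_1^{n_s}$. I would first compare Picard numbers. By Proposition \ref{Pic of prod is prod of Pics for Fano varieties}, applied inductively (a product of Fano varieties is Fano, since $-K$ of the product is the tensor product of the pulled-back ample anticanonical bundles), we get $\rho(X)=r$ and $\rho(Y)=s$. Hence $r=s$. After reordering, I may assume $m_1\geq\cdots\geq m_r$ and $n_1\geq\cdots\geq n_r$; then some index satisfies $m_i\neq n_i$.

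Next, $\phi$ induces a linear isomorphism $\phi_\ast:N_1(X)_{\mathbb{R}}\to N_1(Y)_{\mathbb{R}}$ carrying $\overline{\NE}(X)$ onto $\overline{\NE}(Y)$. Therefore it maps extremal rays to extremal rays, and more generally faces to faces. By Proposition \ref{Nef and Mori cone of product of Fano}, both cones are simplicial, with rays spanned by the $[\widetilde{\ell}_i]$. So $\phi_\ast$ induces a bijection $\sigma$ of $\{1,\ldots,r\}$ with $\phi_\ast\mathbb{R}_{\geq0}[\widetilde{\ell}_i]=\mathbb{R}_{\geq0}[\widetilde{\ell}_{\sigma(i)}]$. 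It follows that $\phi_\ast F^X_{L_i}=F^Y_{L_{\sigma(i)}}$, since each face $F_{L_i}$ is the span of all rays except the $i$-th.

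The key step is to use the uniqueness of extremal contractions (Remark \ref{uniquesness of extremal contraction}). Consider $pr_{n_{\sigma(i)}}\circ\phi:X\to\mathcal{F}_1^{n_{\sigma(i)}}$. It contracts exactly those curves whose classes lie in $\phi_\ast^{-1}F^Y_{L_{\sigma(i)}}=F^X_{L_i}$. It also satisfies the pushforward condition on $\mathcal{O}$, because $\phi$ is an isomorphism and $pr_{n_{\sigma(i)}}$ satisfies it by Proposition \ref{Nef and Mori cone of product of Fano}. So it is an extremal contraction of $F^X_{L_i}$. Uniqueness then gives an isomorphism $\mathcal{F}_1^{m_i}\cong\mathcal{F}_1^{n_{\sigma(i)}}$ compatible with the projections, and in particular $m_i=\dim\mathcal{F}_1^{m_i}=n_{\sigma(i)}$. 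Consequently the multisets $\{m_i\}$ and $\{n_j\}$ coincide, which contradicts the distinctness of the partitions.

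I expect the main obstacle to be making this uniqueness step rigorous. Remark \ref{uniquesness of extremal contraction} only gives uniqueness up to isomorphism of the target, which is enough here because dimension is an isomorphism invariant. Beyond that, I need to check carefully that $\phi_\ast$ preserves the cone and therefore permutes the rays; this holds because pushforward of effective curves is effective and $\phi^{-1}$ gives the inverse map. One could avoid uniqueness altogether by a direct dimension count: the fibres of the contraction of $F^X_{L_i}$ are the translates of $\prod_{j\neq i}\mathcal{F}_1^{m_j}$, so the fibre dimension $n-m_i$ is an invariant of the face. I would mention this as a backup argument.
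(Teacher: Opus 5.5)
Your proposal is correct and takes the paper's own route: the projections are the extremal contractions of the faces $F_{L_i}$, and uniqueness of extremal contractions then recovers the dimensions of the factors. You also transport the simplicial cone along $\phi_\ast$, which yields the permutation $\sigma$ and shows that $pr_{n_{\sigma(i)}}\circ\phi$ is an extremal contraction of $F^X_{L_i}$, so that $m_i=n_{\sigma(i)}$; this makes explicit the comparison step the paper only asserts when it passes from different sets of contractions to different cones of curves.
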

\begin{proof}
By Proposition \ref{Nef and Mori cone of product of Fano}, the set of extremal contractions of the faces $F_{L_i}$'s of $\overline{\NE}(\mathcal{F}_1^{m_1}\times\cdots\times \mathcal{F}_1^{m_r})$ is $\{pr_{m_i}\mid 1\leq i \leq r\}$, whereas the same for $\overline{\NE}(\mathcal{F}_1^{n_1}\times\cdots\times \mathcal{F}_1^{n_s})$ is $\{pr_{n_j}\mid 1\leq j \leq s\}$.  Therefore, if the partitions $(m_1,m_2,\ldots,m_r)$ and $(n_1,n_2,\ldots,n_s)$ of $n$ are distinct, then the corresponding set of extremal contractions are different.  Hence, by Remark \ref{uniquesness of extremal contraction}, the corresponding faces of $\overline{\NE}(\mathcal{F}_1^{m_1}\times\cdots\times \mathcal{F}_1^{m_r})$ and $\overline{\NE}(\mathcal{F}_1^{n_1}\times\cdots\times \mathcal{F}_1^{n_s})$ are different and so are the cone of curves themselves.  Therefore, the product spaces $\mathcal{F}_1^{m_1}\times\cdots\times \mathcal{F}_1^{m_r}$ and $\mathcal{F}_1^{n_1}\times \cdots\times\mathcal{F}_1^{n_s}$ are non-isomorphic.
\end{proof}
\begin{corollary}\label{quadric corollary}
Let $Q^d\subset \mathbb{P}^{d+1}$ denote the smooth quadric hypersurface of
dimension $d$. For any positive integer $n$ and any two
distinct partitions $(m_1,\ldots,m_r)$ and $(n_1,\ldots,n_s)$ of $n$ with
$m_i,n_j\geq 3$ for all $i,j$, the corresponding products
$Q^{m_1}\times\cdots\times Q^{m_r}$ and $Q^{n_1}\times\cdots\times Q^{n_s}$
are non-isomorphic.
\end{corollary}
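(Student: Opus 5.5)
The plan is to deduce the corollary directly from Theorem \ref{Main Theorem_2_with proof}. For each integer $d\geq 3$ we take the fixed Fano variety of Picard number one in dimension $d$ to be $\mathcal{F}_1^d:=Q^d$. It then remains to check three things: that $Q^d$ is a Fano variety in the sense of Definition \ref{Fano variety_definition}, that $\rho(Q^d)=1$, and that the choice of $Q^d$ in each dimension is well defined up to isomorphism, so that it really is a \emph{fixed} factor as Theorem \ref{Main Theorem_2_with proof} requires.

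\emph{Step 1: well-definedness.} Over $\mathbb{C}$, every nondegenerate quadratic form in $d+2$ variables is equivalent to $x_0^2+\cdots+x_{d+1}^2$. Hence any two smooth quadric hypersurfaces in $\mathbb{P}^{d+1}$ are projectively equivalent, so $Q^d$ is unique up to isomorphism.

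\emph{Step 2: $Q^d$ is Fano.} $Q^d$ is a smooth, connected, projective hypersurface of degree $2$ in $\mathbb{P}^{d+1}$. By adjunction,
\begin{equation*}
K_{Q^d}=\bigl(K_{\mathbb{P}^{d+1}}\otimes\mathcal{O}_{\mathbb{P}^{d+1}}(2)\bigr)|_{Q^d}=\mathcal{O}_{Q^d}(-d),
\end{equation*}
so $-K_{Q^d}=\mathcal{O}_{Q^d}(d)$ is ample for every $d\geq 1$.

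\emph{Step 3: Picard number one.} Here the hypothesis $d\geq 3$ is essential. By the Grothendieck--Lefschetz theorem, for a smooth hypersurface $X\subset\mathbb{P}^{N}$ with $\dim X\geq 3$, restriction gives an isomorphism $\Pic(\mathbb{P}^N)\to\Pic(X)$. Hence $\Pic(Q^d)=\mathbb{Z}\cdot\mathcal{O}_{Q^d}(1)$ and $\rho(Q^d)=1$. An alternative route is to view $Q^d$ as a homogeneous space $SO(d+2)/P$ with $P$ a maximal parabolic subgroup. The bound $d\geq 3$ is sharp: $Q^2\cong\mathbb{P}^1\times\mathbb{P}^1$ has Picard number $2$, and $Q^1\cong\mathbb{P}^1$. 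This is why parts of size $1$ or $2$ are excluded; allowing them would give coincidences such as $Q^2\cong Q^1\times Q^1$.

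\emph{Step 4: conclusion.} With these facts in place, all hypotheses of Theorem \ref{Main Theorem_2_with proof} are met for partitions whose parts are all $\geq 3$. Its argument applies verbatim, through Propositions \ref{Pic of prod is prod of Pics for Fano varieties} and \ref{Nef and Mori cone of product of Fano}, with $H_i=\mathcal{O}_{Q^{m_i}}(1)$ and $\ell_i$ a line on $Q^{m_i}$. Therefore $Q^{m_1}\times\cdots\times Q^{m_r}$ and $Q^{n_1}\times\cdots\times Q^{n_s}$ are non-isomorphic.

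The only step with real content is Step 3, and it is handled by citing the Lefschetz theorem. The rest is bookkeeping.
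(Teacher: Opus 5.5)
Your proposal is correct and follows essentially the same route as the paper: check via adjunction that $-K_{Q^d}=\mathcal{O}_{Q^d}(d)$ is ample, use the Lefschetz theorem (which is where $d\geq 3$ is needed) to get $\Pic(Q^d)=\mathbb{Z}\cdot\mathcal{O}_{Q^d}(1)$, note that $Q^d$ is unique up to isomorphism by diagonalising quadratic forms, and then apply Theorem \ref{Main Theorem_2_with proof}. Your additional observation that $Q^2\cong Q^1\times Q^1$, showing that the bound $d\geq 3$ cannot simply be dropped, is a useful remark that the paper does not make.
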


\begin{proof}
Fix $d\geq 3$ and let $Q=Q^d\subset \mathbb{P}^{d+1}$ be a smooth quadric
hypersurface, that is, the zero locus of a nondegenerate quadratic form in $d+2$ variables.  It is a smooth connected projective variety of dimension
$d$.

By the adjunction formula (cf. \cite[Chapter 1, p.~147]{GH}), $K_Q=\mathcal{O}_{\mathbb{P}^{d+1}}(-d-2+2)\big|_Q=\mathcal{O}_Q(-d)$.  Hence, $-K_Q=\mathcal{O}_Q(d)$,
where $\mathcal{O}_Q(1)$ is the restriction of the hyperplane bundle
$\mathcal{O}_{\mathbb{P}^{d+1}}(1)$.  As $\mathcal{O}_Q(1)$ is very ample, being the restriction of the hyperplane bundle under the closed embedding
$Q\hookrightarrow \mathbb{P}^{d+1}$, its positive multiple
$-K_Q=\mathcal{O}_Q(d)$ is ample.  Thus $Q$ is a Fano variety.

Since $\dim\; Q=d\geq 3$, the Lefschetz hyperplane theorem (cf. \cite[Example 3.1.25, p.~195]{L}) yields that the restriction map
\begin{equation*}
\Pic(\mathbb{P}^{d+1})\longrightarrow \Pic(Q)
\end{equation*}
is an isomorphism.
As $\Pic(\mathbb{P}^{d+1})=\mathbb{Z}\cdot\mathcal{O}_{\mathbb{P}^{d+1}}(1)$,
we obtain $\Pic(Q)=\mathbb{Z}\cdot\mathcal{O}_Q(1)\cong \mathbb{Z}$, so that
the Picard number of $Q$ is one.

Over $\mathbb{C}$ any two nondegenerate
quadratic forms in the same number of variables are equivalent, since a
nondegenerate form can be diagonalised to $x_0^2+\cdots+x_{d+1}^2$.  Hence
any two smooth quadric hypersurfaces of dimension $d$ are projectively
equivalent, and in particular isomorphic.  Therefore, for each $d$ there is,
up to isomorphism, a unique smooth quadric $Q^d$.

For $d\geq 3$ the quadric $Q^d$ is a Fano variety with
Picard number one, so it is an admissible factor $\mathcal{F}_1^d$ in the
sense of Theorem \ref{Main Theorem_2_with proof}.  Applying that theorem to
the factors $\mathcal{F}_1^{m_i}=Q^{m_i}$ and $\mathcal{F}_1^{n_j}=Q^{n_j}$,
distinct partitions of $n$ (with all parts at least $3$) yield non-isomorphic
products $Q^{m_1}\times\cdots\times Q^{m_r}$ and
$Q^{n_1}\times\cdots\times Q^{n_s}$.  
\end{proof}

\begin{remark}
As mentioned in the proof of Corollary \ref{quadric corollary}, quadrics of a given dimension are unique up to isomorphism.  Therefore, no fixing
of factors is needed in this case.    
\end{remark}

\begin{remark}\label{Remark about Grassmannians and Fixed det moduli spaces}
\begin{enumerate}
    \item Let $\Gr(k,n)$
denote the Grassmannian parametrizing $k$-dimensional linear subspaces
$V\subset \mathbb{C}^n$, for integers $0<k<n$. It is a smooth, connected, projective variety of dimension $k(n-k)$.  Its Picard Group $\Pic(\Gr(k,n))$, being generated by the Pl\"ucker line bundle $\mathcal{O}_{\Gr(k,n)}(1)$, is isomorphic to $\mathbb{Z}$.  Moreover, its anticanonical bundle $-K_{\Gr(k,n)}$ is $\mathcal{O}_{\Gr(k,n)}(n)$.  That is, $\Gr(k,n)$ is a Fano variety with Picard number one.  More generally, any rational homogeneous projective variety $G/P$ with $G$ simple and $P$ maximal parabolic is a Fano variety with Picard number one (cf. \cite[2nd para after Remark 10.10, p.~55]{Ot} and \cite[\S 14, p.~507-513 and Corollary 14.8, p.~512]{BH}).  Therefore, products of such varieties can also be classified using Theorem \ref{Main Theorem_2_with proof}.
\item Let $C$ be a smooth projective curve over $\mathbb{C}$ of genus $g\geq 2$.  Let $r>0$ and $d$ be co-prime integers.  Then the moduli space $\mathcal{U}_C(r,L)$ of stable bundles on $C$ of rank $r$ and determinant line bundle $L$ of degree $d$ is a  Fano variety with Picard number one (cf. \cite[Theorem B(b), p.~55 and Theorem F, p.~58]{DN}).  Therefore, products of such fixed determinant moduli spaces can also be classified using Theorem \ref{Main Theorem_2_with proof}.
\end{enumerate}    
\end{remark}

\section*{Acknowledgements}
The author would like to express his gratitude to Prof. Arijit Dey, Dr. Chandranandan Gangopadhyay, Prof. J\'anos Koll\'ar and Supravat Sarkar for many useful suggestions.  The author would like to acknowledge Indian Institute of Technology Madras for financial support (Office order No.F.ARU/R10/IPDF/2024).

%\bibliographystyle{plain}
%\bibliography{bibliography.bib}

\end{document}